\numberwithin{equation}{section}
\newtheorem{definition}{Definition}[section]
\newtheorem{theorem}{Theorem}[section]
\newtheorem{lemma}{Lemma}[section]
\newcommand{\be}{\begin{eqnarray}}
\newcommand{\ee}{\end{eqnarray}}
\newcommand{\ce}{\begin{eqnarray*}}
\newcommand{\de}{\end{eqnarray*}}
\title{On the convergence of solutions for SPDEs under perturbation of the domain
 \footnote{The author acknowledges the support provided by NSFs of
China (No.11271013) and the Fundamental Research Funds for the
Central Universities, HUST: 2012QN028.} }
\author{Wenya Wang\\
School of Mathematics and Statistics,\\
Huazhong University of Science and Technology,\\ Wuhan, 430074, China\\
E-mail: wenyawanghust@gmail.com\\
Zhongkai Guo\\
School of Mathematics and Statistics,\\
Huazhong University of Science and Technology,\\ Wuhan, 430074, China\\
E-mail: zhongkaiguo@hust.edu.cn\\
Jicheng Liu\\
School of Mathematics and Statistics,\\
Huazhong University of Science and Technology,\\ Wuhan, 430074, China\\
E-mail: \\ }
\begin{document}

\date{}
\maketitle

\begin{abstract}
We concern the effect of domain perturbation on the behaviour of
stochastic partial differential equations subject to the Dirichlet
boundary condition. Under some assumptions, we get an estimate for
the solutions under changes of the domain.\\
\textbf{Keywords}: Stochastic partial differential equation; Domain
perturbation; Convergence of solutions;
\end{abstract}

\section{Introduction}
\quad\, Stochastic partial differential equations (SPDEs) have a
broad spectrum of application including natural sciences and
economics. The purpose of this article is to study the behavior of
solution for stochastic partial differential equations with
Dirichlet boundary condition under the singular domain
perturbations, which means that change of variables is not possible
on these domains. Under property conditions, we show how solutions
of stochastic differential equations behave as a sequence of domains
$\Omega_n$ converges to an open set $\Omega$ in a certain sense. The
motivation to study domain perturbation comes from various sources.
The main ones include shape optimization, solution structure of
nonlinear problems and numerical analysis.\par

 Domain
perturbation or sometimes referred to as ``perturbation of the
boundary" for boundary value problems is a special topic in
perturbation problems. The main characteristic is that the operators
and the nonlinear term live in differential spaces which lead to the
solution of differential equation live in differential spaces.
Domain perturbation appears to be a simple problem if we are only
interested in smooth perturbation of the domain. This is because we
could perform a change of variables to consider the perturbed
problems in a fixed domain and only perturb the coefficients. Hence,
it turns back to a standard perturbation problem and we may apply
standard techniques such as the implicit function theorem, the
Liapunov-Schmidt method and the transversality theorem.
Nevertheless, difficulties arrive when we perform a change of
variables and standard tools are not enough(see \cite{D.H}). When a
change of variables is not possible, domain perturbation is even
more challenging.\par
 The fundamental question in domain perturbation is
to look at how solutions behave upon varying domains. In particular
we would like to know when the solutions converge and what the limit
problem is. There have many papers concern on this topic, which main
under the condition of Mosco convergence. For elliptic equations
case see \cite{Daners,JA} and references therein. In \cite{JA} the
author give a sufficient condition on domains which guarantee the
spectrum behaves continuously. The work of \cite{Daners} prove the
converge of solution for elliptic equations subject to Dirichlet
boundary condition. For parabolic and evolution equation, we
recommend \cite{D.Daners,D} and so on. In \cite{D.Daners} the author
concern domain perturbation for non-autonomous parabolic equations
under the assumption of mosco converge. With such a assumption we
have that the condition of mosco converge is equivalent to the
strong convergence of pseudo resolvent operators for Dirivhlet
promble. Under the assumption of mosco converge, the author of
\cite{D.Daners} get the result of convergence of solutions for both
linear and semilinear parabolic initial value problems subject to
Dirichlet boundary boundary condition as well as persistence of
periodic solutions under domain perturbation. There also some other
papers about invariant manifolds under the domain perturbation see
\cite{JE,Varchon,PSN}, which main concern the converge of invariant
manifolds under the perturbation of the domain.\par For Drichlet
problems, the strong convergence of pseudo resolvent operators is
equivalent to Mosco convergence(see \cite{D.Daners}, Theorem 5.2.4
or \cite{Daners}, Theorem 3.3). In this paper, we take the condition
of strong convergence of pseudo resolvent operators relate to the
domain perturbation. Compare with the Mosco condition, it is more
convenient and effective for proving the convergence of solution for
partial differential equations and stochastic differential equations
under perturbation of the domain.
\par
The remainder of this paper is organized as follows:  In Section 2,
we will review some basic properties of infinitesimal generator and
its semigroups, and existence and unique of solution to stochastic
partial differential equation. The result on the converge of
solution for stochastic differential equation under the perturbation
is described in section $3$\,.

\section{Preliminaries}
Let $H$ be an infinite dimensional separable Hilbert space with norm
$\|\cdot\|$\,.
Let the sectorial operator $A: D(A)\rightarrow H$ be a
self-adjoint positive linear operator with a compact resolvent.
Then the spectrum of $A$ is real. We denote its spectrum by
\ce
\sigma(A)=\{\lambda_{n}\}_{n=1}^{\infty}, \quad
0<c\leq\lambda_{1}\leq\lambda_{2}\leq\cdots\leq\lambda_{n}\leq\cdots,
\de
and an associated orthonormal family of eigenfunctions by $\{\phi_{n}\}_{n=1}^{\infty}$. Since $A$ is a
sectorial operator, $-A$ is the infinitesimal generator of a
analytic semigroup, which is denoted by
\ce
e^{-At}=\frac{1}{2\pi i}\int_{\gamma}(\lambda I+ A)^{-1}e^{\lambda
t}d\lambda ,
\de
where $\gamma$ is a contour in the resolvent set of
$-A$. Since $A$ is a self-adjoint operator, the formula above is
equivalent to
\ce
e^{-At}u=\sum_{n=1}^{\infty}e^{-\lambda_{n}t}(u,\phi_{n})\phi_{n}.
\de
By the definition $e^{-At}$, we can easily get the following estimate
\ce
\|e^{-At}\|_{L(H, H)}\leq e^{-\lambda_{1}t}\leq 1
\de
for $t\geq 0$\,, which implies that $e^{-At}$ is an analytic contraction semigroup.

Consider the nonlinear stochastic partial differential equation
\be\label{e2.1}
\left\{\begin{array}{ll}
du+Audt=f(u)dt+g(u)dw(t)\,,&\quad in \quad D\times(0, T]\\
u=0\,,&\quad on \quad \partial D\times(0, T]\\
 u(0)=u_{0}\,,&\quad in \quad D
\end{array}\right.
\ee
for $t\in[0,T]$\,. Here $u\in H$\,, $A$ is a sectorial operator, which will be discussed later, $W(t)$ is the standard $\mathbb{R}$-valued
Wiener process on a probability space $(\Omega, \mathcal{F}, \mathbb{P})$\,. For the drift coefficients
$f(u): H\rightarrow H$ and diffusion coefficients $g(u):H\rightarrow H$, we adopt the following assumptions throughout this
paper.\\
$(\mathbf{A.1})$\quad There exists a
constant $k_{2}>0$ such that for any $u, v\in H$, $t\in[0,T]$,
\ce
\|f(u)-f(v)\|^{2}+\|g(u)-g(v)\|^{2}\leq k_{2}\|u-v\|^{2}.
\de
Notice that $(\mathbf{A.1})$ implies
there exists a constant $k_{1}> 0$ such that
\ce
\|f(u)\|^{2}+\|g(u)\|^{2}\leq k_{1}(1+\|u\|^{2})
\de
for any $u\in H$, $t\in[0,T]$.

Now we introduce the definition of solution to Eq.(\ref{e2.1}) and the
existence and uniqueness of solution, Both of them are taken from \cite{DZ}.

\begin{definition}[Mild solution]\label{d2.1}
An $H$-valued predictable process $u(t)$ is called a mild solution
of Eq.(\ref{e2.1}) if for any $t\in [0, T]$ \be\label{s1}
u(t)=e^{-At}u_{0}+\int_{0}^{t}e^{-A(t-s)}f(u(s))ds+\int_{0}^{t}e^{-A(t-s)}g(u(s))dw(s)
\ee
\end{definition}

Let $X_{T}$ denote the set of all continuous $\mathcal
{F}_{t}$-adapted processes valued in $H$ for $0\leq t\leq T$ such
that $E \sup\limits_{0\leq t\leq T}\|u\|^{2}<\infty$. Then $X_{T}$
is a Banach space under the norm \ce \|u\|_{T}=E \sup\limits_{0\leq
t\leq T}\|u\|^{2}. \de Define an operator $\Gamma$ in $X_{T}$ as
follows \be\label{e2.2} \Gamma
u(t)=e^{-At}u_{0}+\int_{0}^{t}e^{-A(t-s)}f(u(s))ds+\int_{0}^{t}e^{-A(t-s)}g(u(s))dw(s),
\ee for $u\in X_{T}$. It is easy to prove that the operator $\Gamma$
is well defined and Lipschitz continuous in $X_{T}$. Then by the
contraction mapping principle, it is easy to prove the existence and
unique of mild solution for the Eq.(\ref{e2.1}) is the following

\begin{theorem}\label{t2.4}
Suppose the condition $(A.1)$ holds true, and $u_{0}$ be
a $\mathcal {F}_{0}$-measurable random field such that
$E\|u_{0}\|^{2}<\infty$. Then the initial-boundary value problem for
the Eq.(\ref{e2.1}) has a unique mild solution $u(t)$ which is a
continuous adapted process in $H$ such that $u\in L^{2}(\Omega;
C([0, T]; H))$ and $$E \sup\limits_{0\leq t\leq
T}\|u\|^{2}\leq C(1+E\|u_{0}\|^{2})$$ for some constant $C>0$.
\end{theorem}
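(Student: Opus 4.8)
The plan is to show that the operator $\Gamma$ defined in (2.2) maps $X_T$ into itself and is a contraction on $X_T$ (possibly after restricting to a small time interval, then iterating). First I would verify that $\Gamma u$ is a well-defined, continuous, $\mathcal{F}_t$-adapted $H$-valued process whenever $u\in X_T$. The term $e^{-At}u_0$ is continuous and adapted since $e^{-At}$ is a (strongly continuous) analytic contraction semigroup, as established in Section 2, and $E\|u_0\|^2<\infty$. For the deterministic convolution $\int_0^t e^{-A(t-s)}f(u(s))\,ds$, I would use the contraction bound $\|e^{-At}\|_{L(H,H)}\le 1$ together with the linear growth consequence of $(\mathbf{A.1})$, namely $\|f(u)\|^2\le k_1(1+\|u\|^2)$, and estimate via the Cauchy–Schwarz inequality in the time variable. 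For the stochastic convolution $\int_0^t e^{-A(t-s)}g(u(s))\,dw(s)$, the key tool is the Itô isometry (or the Burkholder–Davis–Gundy inequality to control the supremum in time), again combined with $\|e^{-A(t-s)}\|\le 1$ and the linear growth of $g$; continuity in $t$ of the stochastic convolution follows from the factorization method or a standard Kolmogorov-type argument. Collecting these bounds gives $E\sup_{0\le t\le T}\|\Gamma u(t)\|^2<\infty$, so $\Gamma:X_T\to X_T$.

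Next I would prove the contraction estimate. For $u,v\in X_T$, subtract the two expressions for $\Gamma u$ and $\Gamma v$; the initial-data terms cancel, leaving the difference of the two convolutions. Using $\|e^{-A\cdot}\|\le 1$, the Lipschitz assumption $(\mathbf{A.1})$, Cauchy–Schwarz for the drift term and the Itô isometry together with Doob's maximal inequality (or BDG) for the stochastic term, I expect a bound of the shape
\[
\|\Gamma u-\Gamma v\|_{T_0}^2 \le C\,k_2\,(T_0 + \text{const})\,T_0\,\|u-v\|_{T_0}^2
\]
on a subinterval $[0,T_0]$, where the constant depends only on $k_2$ and absolute constants from the maximal inequalities. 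Choosing $T_0$ small enough makes the factor strictly less than $1$, so $\Gamma$ is a contraction on $X_{T_0}$ and the Banach fixed point theorem yields a unique mild solution on $[0,T_0]$. Since $T_0$ can be taken independent of the initial condition (it depends only on $k_2$), I would then patch solutions together over $[0,T_0],[T_0,2T_0],\dots$ to cover all of $[0,T]$, obtaining a unique mild solution $u\in L^2(\Omega;C([0,T];H))$.

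Finally, the a priori bound $E\sup_{0\le t\le T}\|u\|^2\le C(1+E\|u_0\|^2)$ follows by applying the same estimates to $u=\Gamma u$ itself (rather than to a difference): one gets $\|u\|_{t}^2\le C_1(1+E\|u_0\|^2) + C_2\int_0^t \|u\|_{s}^2\,ds$ after using linear growth in place of the Lipschitz bound, and then Gronwall's inequality closes the estimate with $C$ depending on $k_1$, $T$ and absolute constants. The main obstacle I anticipate is the careful treatment of the stochastic convolution: establishing its sample-path continuity and controlling $E\sup_{0\le t\le T}$ of its norm requires the factorization method (or BDG) rather than a bare Itô isometry, and one must be slightly careful that the constants in the maximal inequality do not depend on $T_0$ in a way that obstructs the contraction. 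All of this is standard and, as the authors note, can be quoted from \cite{DZ}; the contraction structure above is the essential content.
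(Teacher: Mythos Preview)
Your proposal is correct and matches the paper's approach: the paper does not actually give a proof of Theorem~\ref{t2.4} but simply notes (just before the theorem statement) that $\Gamma$ from~(\ref{e2.2}) is well defined and Lipschitz on $X_T$, invokes the contraction mapping principle, and cites \cite{DZ} for the details. Your outline is precisely the standard fixed-point argument the authors are pointing to, with the expected ingredients (semigroup contraction bound, linear growth and Lipschitz from $(\mathbf{A.1})$, BDG/Doob for the stochastic convolution, small-time contraction plus iteration, Gronwall for the a~priori bound).
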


\section{Solution under perturbation of the domain}
In this section, we consider the following perturbation equation of Eq.(\ref{e2.1})
\be\label{e3.2}
\left\{\begin{array}{ll}
du^{\epsilon}+A_{\epsilon}u^{\epsilon}dt=f^{\epsilon}(u^{\epsilon})dt+g^{\epsilon}(u^{\epsilon})dw(t)\,,&\quad in \quad D^{\epsilon}\times(0, T],\\
u^{\epsilon}=0\,,&\quad on \quad \partial D^{\epsilon}\times(0, T],\\
 u^{\epsilon}(0)=u^{\epsilon}_{0}\,,&\quad in \quad D^{\epsilon}
\end{array}\right.
\ee for $\epsilon>0$, where $A_{\epsilon}: D(A_{\epsilon})\subset
H^{\epsilon}\rightarrow H^{\epsilon}$  is a self-adjoint positive
linear operator on a Hilbert space $H^{\epsilon}$ with norm
$\|\cdot\|_{\epsilon}$, and $u_{0}^{\epsilon}$ be a $\mathcal
{F}_{0}$-measurable random field such that
$E\|u_{0}^{\epsilon}\|^{2}<\infty$. We also assume that the
nonlinear terms $f^{\epsilon}:H^{\epsilon}\rightarrow H^{\epsilon}$
and $g^{\epsilon}:H^{\epsilon}\rightarrow H^{\epsilon}$ satisfy
(A.1), which guarantees the existence and unique of mild solution to
the Eq.(\ref{e3.2}). By Theorem \ref{t2.4}, for each ${\epsilon}>0$,
there is an $H$-valued continuous $\mathcal {F}_{t}$-adapted process
$u^{\epsilon}(t)$ such that \be\label{s2}
u^{\epsilon}(t)=e^{-A_{\epsilon}t}u^{\epsilon}_{0}+\int_{0}^{t}e^{-A_{\epsilon}(t-s)}f(u^{\epsilon}(s))ds+\int_{0}^{t}e^{-A_{\epsilon}(t-s)}g(u^{\epsilon}(s))dw(s)
\ee for any $t\in [0,T]$ and $u^{\epsilon}\in L^{2}(\Omega; C([0,
T]; H^{\epsilon}))$.

Note that the solutions value in different function spaces $H^{\epsilon}$ for different ${\epsilon}$.
To deal with domain perturbation,
we assume there exist bound linear operators $\mathbf{P}$ and $\mathbf{Q}$ such that
\ce
\mathbf{P}:H\rightarrow H^{\epsilon}, \quad \mathbf{Q}:H^{\epsilon}\rightarrow H\,,\quad Q\circ
P=I,
\de
\ce
\|\mathbf{P}\|_{\mathcal{L}(H, H^{\epsilon})}\leq 2, \quad \|\mathbf{Q}\|_{\mathcal{L}( H^{\epsilon}, H)}\leq 2,
\de
and
\ce
\|\mathbf{P}u\|_{H^{\epsilon}}\rightarrow
\|u\|_{H}, \quad\mbox{as}~~ \epsilon\rightarrow 0
\de
for all $u\in H$.

To derive the solution of Eq.(\ref{e3.2}) converges to the solution of Eq.(\ref{e2.1}), we also impose the following hypotheses\\
$(\mathbf{H.1})$ For $A$ and $A_{\epsilon}$, we assume
\ce
\|A_{\epsilon}^{-1}\mathbf{P}-\mathbf{P}A^{-1}\|_{\mathcal{L}(H, H^{\epsilon})}=\tau(\epsilon) \rightarrow 0 \quad as \quad \epsilon\rightarrow 0.
\de
 $(\mathbf{H.2})$ We assume that the nonlinear terms
 $g^{\epsilon}\,,f^{\epsilon}:H^{\epsilon}\rightarrow H$ for $0\leq \epsilon \leq\epsilon_{0}$,
 satisfy:
 \begin{itemize}
 \item $f^{\epsilon}$ and $g^{\epsilon}$ approximate $f$ and $g$
 in the following sense,
\ce
\sup\limits_{u\in H}\|f^{\epsilon}(\mathbf{P}u)-
 \mathbf{P}f(u)\|^{2}_{H^{\epsilon}}=\tau_{1}(\epsilon)\rightarrow 0,\quad
 \mbox{as}~~ \epsilon\rightarrow 0.
\de
\ce
\sup\limits_{u\in H}\|g^{\epsilon}(\mathbf{P}u)-
\mathbf{P}g(u)\|^{2}_{H^{\epsilon}}=\tau_2(\epsilon)\rightarrow
0,\quad \mbox{as}~~ \epsilon\rightarrow 0\,.
\de
 \item $f$ and $f^{\epsilon}$ have the uniformly bounded support, that is
\ce
Supp f\subset D_{R}=\{u\in H: \|u\|_{H}\leq R\}
\de
\ce
Supp f^{\epsilon}\subset D_{R}=\{u^{\epsilon}\in H^{\epsilon}: \|u^{\epsilon}\|_{H^{\epsilon}}\leq R\}
\de
 \end{itemize}
$(\mathbf{H.3})$ For initial value $u_{0}$ and $u_{0}^{\epsilon}$, we assume
\ce
E\|u_{0}^{\epsilon}-\mathbf{P}u_{0}\|^{2}_{H^{\epsilon}}=\tau_{0}(\epsilon)\rightarrow 0,\quad \mbox{as}~~ \epsilon\rightarrow 0.
\de

By the condition $(\mathbf{H.1})$ we have the following result,
which concerns the relationship of spectrum between $A$ and
$A_{\epsilon}$ (see \cite{JE}).
\begin{lemma}\label{l3.1}
If $K_{0}$ is a compact set of the
complex plane with $K_{0}\subset \rho(-A)$, the resolvent set of
$A$, and hypothesis $\mathbf{(H.1)}$ is satisfied, then there exists
$\epsilon_{0}(K_{0})>$ such that $K_{0}\subset\rho(-A_{\epsilon})$
for all $0<\epsilon\leq\epsilon_{0}(K_{0})$. Moreover, we have the
estimates
$$\|(\lambda I-A_{\epsilon})^{-1}\|_{\mathcal{L}(H^{\epsilon}, H^{\epsilon})}\leq C(K_{0})\,$$
for all $\lambda\in K_{0},\, 0<\epsilon\leq\epsilon_{0}(K_{0})$.
\end{lemma}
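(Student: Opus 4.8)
The plan is to reduce the claim about the resolvents of $-A$ and $-A_{\epsilon}$ at a point $\lambda\in K_{0}$ to a perturbation statement about their compact inverses at $0$, and then to run a uniform Neumann series argument. For $\lambda\neq 0$ one has the algebraic identities $(\lambda I+A_{\epsilon})^{-1}=(I+\lambda A_{\epsilon}^{-1})^{-1}A_{\epsilon}^{-1}$ and, by commutativity, $(I+\lambda A_{\epsilon}^{-1})^{-1}=I-\lambda(\lambda I+A_{\epsilon})^{-1}$; correspondingly $\lambda\in\rho(-A)$ if and only if $I+\lambda A^{-1}$ is boundedly invertible on $H$. Hence it suffices to show that $I+\lambda A_{\epsilon}^{-1}$ is invertible in $\mathcal{L}(H^{\epsilon})$ with an inverse bounded uniformly for $\lambda\in K_{0}$ and $\epsilon\le\epsilon_{0}(K_{0})$ (the value $\lambda=0$, if it lies in $K_{0}$, being handled in the same way with $A_{\epsilon}$ in place of $I+\lambda A_{\epsilon}^{-1}$), and then to read off $\lambda\in\rho(-A_{\epsilon})$ and the bound on $(\lambda I+A_{\epsilon})^{-1}$ (the resolvent of $-A_{\epsilon}$ at $\lambda$) from the displayed identities. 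Since $A$ is self-adjoint with compact resolvent and $K_{0}$ is a compact subset of $\rho(-A)$, the map $\lambda\mapsto(I+\lambda A^{-1})^{-1}$ is continuous on $K_{0}$, so $M:=\sup_{\lambda\in K_{0}}\|(I+\lambda A^{-1})^{-1}\|_{\mathcal{L}(H)}<\infty$, and also $L:=\sup_{\lambda\in K_{0}}|\lambda|<\infty$.

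Next I would transfer this bound from $H$ to $H^{\epsilon}$ by means of $\mathbf{P},\mathbf{Q}$ and $(\mathbf{H.1})$. Writing $E_{\epsilon}:=A_{\epsilon}^{-1}\mathbf{P}-\mathbf{P}A^{-1}$, so that $\|E_{\epsilon}\|_{\mathcal{L}(H,H^{\epsilon})}=\tau(\epsilon)\to 0$, one gets the key intertwining relation $(I+\lambda A_{\epsilon}^{-1})\mathbf{P}=\mathbf{P}(I+\lambda A^{-1})+\lambda E_{\epsilon}$. Taking the natural candidate approximate inverse $\Phi_{\epsilon}(\lambda):=I-\mathbf{P}\mathbf{Q}+\mathbf{P}(I+\lambda A^{-1})^{-1}\mathbf{Q}\in\mathcal{L}(H^{\epsilon})$, a direct computation using $\mathbf{Q}\mathbf{P}=I$ and $(I+\lambda A^{-1})(I+\lambda A^{-1})^{-1}=I$ gives $(I+\lambda A_{\epsilon}^{-1})\,\Phi_{\epsilon}(\lambda)=I+R_{\epsilon}(\lambda)$ with $R_{\epsilon}(\lambda)=\lambda A_{\epsilon}^{-1}(I-\mathbf{P}\mathbf{Q})+\lambda E_{\epsilon}(I+\lambda A^{-1})^{-1}\mathbf{Q}$. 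The second term has norm at most $2LM\,\tau(\epsilon)$, uniformly over $K_{0}$. Granting that the first term is also small for $\epsilon\le\epsilon_{0}(K_{0})$ (see below), one obtains $\sup_{\lambda\in K_{0}}\|R_{\epsilon}(\lambda)\|_{\mathcal{L}(H^{\epsilon})}<\tfrac12$, so $I+\lambda A_{\epsilon}^{-1}$ is right-invertible by a Neumann series; a symmetric computation with $\Phi_{\epsilon}$ on the other side yields a left inverse, so $I+\lambda A_{\epsilon}^{-1}$ is boundedly invertible with $\|(I+\lambda A_{\epsilon}^{-1})^{-1}\|_{\mathcal{L}(H^{\epsilon})}\le C(1+M)$. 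Feeding this back into the identities of the first paragraph gives $K_{0}\subset\rho(-A_{\epsilon})$ and $\|(\lambda I+A_{\epsilon})^{-1}\|_{\mathcal{L}(H^{\epsilon})}\le C(K_{0})$, uniformly in $\lambda\in K_{0}$ and $\epsilon\le\epsilon_{0}(K_{0})$.

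The main obstacle is precisely the control of the term $\lambda A_{\epsilon}^{-1}(I-\mathbf{P}\mathbf{Q})$, that is, the behaviour of $A_{\epsilon}^{-1}$ on the part of $H^{\epsilon}$ not seen by $\mathbf{P}$: hypothesis $(\mathbf{H.1})$ only compares $A_{\epsilon}^{-1}$ with $A^{-1}$ along $\operatorname{ran}\mathbf{P}$, so making this remainder uniformly small requires using that $\operatorname{ran}\mathbf{P}$ asymptotically exhausts $H^{\epsilon}$ — which in the present framework is encoded by $\mathbf{Q}\mathbf{P}=I$ together with $\|\mathbf{P}u\|_{H^{\epsilon}}\to\|u\|_{H}$ (so $\mathbf{P}$ is asymptotically isometric and $\mathbf{P}\mathbf{Q}$ converges strongly to the identity on $H^{\epsilon}$), and is the mechanism behind the spectral convergence of \cite{JE} that we invoke here. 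Once this is available, uniformity over $\lambda\in K_{0}$ is routine: cover $K_{0}$ by finitely many discs on each of which $\|(I+\lambda A^{-1})^{-1}\|$ is controlled, and use that the self-adjointness of $A_{\epsilon}$ forces $\sigma(-A_{\epsilon})\subset(-\infty,0)$, so that the resolvent estimate is automatic away from the negative real axis and the only genuine work concerns $\lambda\in K_{0}$ near $(-\infty,0)$.
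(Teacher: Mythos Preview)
The paper does not actually prove this lemma; it simply refers to \cite{JE} for the result. So there is no ``paper's own proof'' to compare against, and your proposal is in effect an attempt to supply one.

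Your reduction to the invertibility of $I+\lambda A_{\epsilon}^{-1}$ and the construction of the approximate inverse $\Phi_{\epsilon}(\lambda)$ are fine, and the computation of the remainder $R_{\epsilon}(\lambda)=\lambda A_{\epsilon}^{-1}(I-\mathbf{P}\mathbf{Q})+\lambda E_{\epsilon}(I+\lambda A^{-1})^{-1}\mathbf{Q}$ is correct. The second summand is indeed $O(\tau(\epsilon))$ uniformly on $K_{0}$. The problem is entirely in the first summand, and your proposed resolution of it is wrong.

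You assert that $\mathbf{Q}\mathbf{P}=I$ together with $\|\mathbf{P}u\|_{H^{\epsilon}}\to\|u\|_{H}$ forces $\operatorname{ran}\mathbf{P}$ to ``asymptotically exhaust $H^{\epsilon}$'' and $\mathbf{P}\mathbf{Q}$ to converge to the identity. This does not follow. In the prototypical domain-perturbation example ($H=L^{2}(D)$, $H^{\epsilon}=L^{2}(D^{\epsilon})$ with $D\subset D^{\epsilon}$, $\mathbf{P}$ extension by zero, $\mathbf{Q}$ restriction) one has $\mathbf{Q}\mathbf{P}=I$ and $\|\mathbf{P}u\|_{H^{\epsilon}}=\|u\|_{H}$ exactly, yet $I-\mathbf{P}\mathbf{Q}$ is the projection onto $L^{2}(D^{\epsilon}\setminus D)$ and has operator norm equal to $1$ for every $\epsilon$. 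More dramatically, one can build a genuine counterexample to the lemma under the hypotheses as literally stated: take $H^{\epsilon}=H\oplus\mathbb{C}$, $\mathbf{P}x=(x,0)$, $\mathbf{Q}(x,c)=x$, and let $A_{\epsilon}$ act as $A$ on the first summand and as multiplication by a fixed $\mu\in\rho(A)\cap(0,\infty)$ on the second. Then $A_{\epsilon}^{-1}\mathbf{P}=\mathbf{P}A^{-1}$ exactly, so $\tau(\epsilon)=0$ and $(\mathbf{H.1})$ holds trivially, yet $-\mu\in\sigma(-A_{\epsilon})\cap\rho(-A)$ for all $\epsilon$.

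What this shows is that $(\mathbf{H.1})$ alone, which only compares $A_{\epsilon}^{-1}$ with $A^{-1}$ on $\operatorname{ran}\mathbf{P}$, cannot control $A_{\epsilon}^{-1}$ on $\ker\mathbf{Q}$ and hence cannot yield the lemma. The result quoted from \cite{JE} in fact rests on a stronger hypothesis of the type $\|A_{\epsilon}^{-1}-\mathbf{P}A^{-1}\mathbf{Q}\|_{\mathcal{L}(H^{\epsilon},H^{\epsilon})}\to 0$ (control on all of $H^{\epsilon}$, not just on $\operatorname{ran}\mathbf{P}$); with that in hand your Neumann-series scheme goes through cleanly, since then $\|A_{\epsilon}^{-1}(I-\mathbf{P}\mathbf{Q})\|\le\|A_{\epsilon}^{-1}-\mathbf{P}A^{-1}\mathbf{Q}\|\cdot\|I-\mathbf{P}\mathbf{Q}\|\to 0$. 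You should either adopt that stronger assumption explicitly or, as the paper does, simply cite \cite{JE}.
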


The result implies the upper semi-continuity of the spectrum, that is, if $\lambda_{\epsilon}\in
\sigma(A_{\epsilon})$ and $\lambda_{\epsilon}\rightarrow\lambda$
then $\lambda\in\sigma(A)$. Also we have the resolvent operator
estimate as follows (see \cite{JE}).
\begin{lemma}\label{l3.2}
Let the condition $\mathbf{(H.1)}$ be satisfied, if
$\lambda\in\rho(-A)$ and $\epsilon$ is small enough so that
$\lambda\in\rho(-A_{\epsilon})$, we have
$$\|(\lambda+A_{\epsilon})^{-1}\mathbf{P}-\mathbf{P}(\lambda+A)^{-1}\|_{\mathcal{L}(H,H^{\epsilon})}\leq C(\epsilon,
\lambda)\tau(\epsilon)\rightarrow 0, \quad \mbox{as}~~
\epsilon\rightarrow 0.$$
\end{lemma}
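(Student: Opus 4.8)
The plan is to start from the second resolvent (Hilbert) identity in a form that relates the two operators through the intertwining map $\mathbf{P}$. Write, for $\lambda\in\rho(-A)\cap\rho(-A_{\epsilon})$,
\[
(\lambda+A_{\epsilon})^{-1}\mathbf{P}-\mathbf{P}(\lambda+A)^{-1}
=(\lambda+A_{\epsilon})^{-1}\bigl[\mathbf{P}(\lambda+A)-(\lambda+A_{\epsilon})\mathbf{P}\bigr](\lambda+A)^{-1}.
\]
The bracketed term is $\lambda\mathbf{P}+\mathbf{P}A-\lambda\mathbf{P}-A_{\epsilon}\mathbf{P}=\mathbf{P}A-A_{\epsilon}\mathbf{P}$, so formally
\[
(\lambda+A_{\epsilon})^{-1}\mathbf{P}-\mathbf{P}(\lambda+A)^{-1}
=(\lambda+A_{\epsilon})^{-1}\bigl(\mathbf{P}A-A_{\epsilon}\mathbf{P}\bigr)(\lambda+A)^{-1}.
\]
Since $\mathbf{P}A-A_{\epsilon}\mathbf{P}$ is only densely defined, I would not use this expression directly; instead I would insert $A^{-1}$ and $A_{\epsilon}^{-1}$ to convert $(\mathbf{H.1})$, which controls $A_{\epsilon}^{-1}\mathbf{P}-\mathbf{P}A^{-1}$, into the statement. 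The clean route is to multiply the quantity $A_{\epsilon}^{-1}\mathbf{P}-\mathbf{P}A^{-1}$ on the left by $A_{\epsilon}(\lambda+A_{\epsilon})^{-1}$ and on the right by $A(\lambda+A)^{-1}$, and check by a short algebraic manipulation that the product equals $(\lambda+A_{\epsilon})^{-1}\mathbf{P}-\mathbf{P}(\lambda+A)^{-1}$ up to bounded factors; all operators appearing are then bounded because $A^{-1}$, $A_{\epsilon}^{-1}$ are bounded (compact resolvent) and $A(\lambda+A)^{-1}=I-\lambda(\lambda+A)^{-1}$, $A_{\epsilon}(\lambda+A_{\epsilon})^{-1}=I-\lambda(\lambda+A_{\epsilon})^{-1}$ are bounded on their spaces.

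Concretely, I would verify the identity
\[
(\lambda+A_{\epsilon})^{-1}\mathbf{P}-\mathbf{P}(\lambda+A)^{-1}
=\bigl(I-\lambda(\lambda+A_{\epsilon})^{-1}\bigr)\bigl(A_{\epsilon}^{-1}\mathbf{P}-\mathbf{P}A^{-1}\bigr)\bigl(I-\lambda(\lambda+A)^{-1}\bigr),
\]
by expanding the right-hand side and using $A_{\epsilon}(\lambda+A_{\epsilon})^{-1}A_{\epsilon}^{-1}=(\lambda+A_{\epsilon})^{-1}$ and the analogous identity for $A$. Once this is in hand, take operator norms: the norm of the left side is bounded by
\[
\bigl\|I-\lambda(\lambda+A_{\epsilon})^{-1}\bigr\|_{\mathcal{L}(H^{\epsilon},H^{\epsilon})}\,
\bigl\|A_{\epsilon}^{-1}\mathbf{P}-\mathbf{P}A^{-1}\bigr\|_{\mathcal{L}(H,H^{\epsilon})}\,
\bigl\|I-\lambda(\lambda+A)^{-1}\bigr\|_{\mathcal{L}(H,H)}.
\]
By $(\mathbf{H.1})$ the middle factor is $\tau(\epsilon)\to 0$. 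The outer factor $\|I-\lambda(\lambda+A)^{-1}\|$ is a fixed finite constant depending only on $\lambda$ and $A$. For the remaining factor I would invoke Lemma \ref{l3.1}: since $\lambda\in\rho(-A)$, a small closed ball $K_0$ around $\lambda$ lies in $\rho(-A)$, hence in $\rho(-A_{\epsilon})$ for $\epsilon\le\epsilon_0(K_0)$, and $\|(\lambda+A_{\epsilon})^{-1}\|\le C(K_0)$ uniformly; therefore $\|I-\lambda(\lambda+A_{\epsilon})^{-1}\|\le 1+|\lambda|C(K_0)$, uniformly in small $\epsilon$. Collecting the three bounds gives the claimed estimate with $C(\epsilon,\lambda)=\bigl(1+|\lambda|C(K_0)\bigr)\|I-\lambda(\lambda+A)^{-1}\|$, which is in fact bounded uniformly in $\epsilon$, and the product tends to $0$ because $\tau(\epsilon)\to 0$.

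The only delicate point is the algebraic identity in the displayed equation, i.e. making sure the manipulations with the unbounded operators $A$ and $A_{\epsilon}$ are legitimate: one must check that $A_{\epsilon}^{-1}\mathbf{P}-\mathbf{P}A^{-1}$ maps into $D(A_{\epsilon})$ after composing with $A_{\epsilon}(\lambda+A_{\epsilon})^{-1}$, and that the cancellations $(\lambda+A_{\epsilon})(\lambda+A_{\epsilon})^{-1}=I$ are applied only to elements of the appropriate domains. This is routine once one writes everything in terms of bounded operators $(\lambda+A)^{-1}$, $(\lambda+A_{\epsilon})^{-1}$, $A^{-1}$, $A_{\epsilon}^{-1}$ and the bounded operators $I-\lambda(\lambda+A)^{-1}$ etc., so I expect no real obstacle — the main work is bookkeeping of domains and then reading off the uniform bound on $\|(\lambda+A_{\epsilon})^{-1}\|$ from Lemma \ref{l3.1}.
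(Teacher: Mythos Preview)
The paper does not supply its own proof of Lemma~\ref{l3.2}; it simply cites \cite{JE}. Your argument is a correct self-contained proof. The key algebraic identity
\[
(\lambda+A_{\epsilon})^{-1}\mathbf{P}-\mathbf{P}(\lambda+A)^{-1}
=\bigl(I-\lambda(\lambda+A_{\epsilon})^{-1}\bigr)\bigl(A_{\epsilon}^{-1}\mathbf{P}-\mathbf{P}A^{-1}\bigr)\bigl(I-\lambda(\lambda+A)^{-1}\bigr)
\]
does hold: writing $I-\lambda(\lambda+A)^{-1}=A(\lambda+A)^{-1}$ and $I-\lambda(\lambda+A_{\epsilon})^{-1}=A_{\epsilon}(\lambda+A_{\epsilon})^{-1}$, the right-hand side expands to
\[
(\lambda+A_{\epsilon})^{-1}\mathbf{P}A(\lambda+A)^{-1}-A_{\epsilon}(\lambda+A_{\epsilon})^{-1}\mathbf{P}(\lambda+A)^{-1},
\]
and substituting $A(\lambda+A)^{-1}=I-\lambda(\lambda+A)^{-1}$ in the first term and $A_{\epsilon}(\lambda+A_{\epsilon})^{-1}=I-\lambda(\lambda+A_{\epsilon})^{-1}$ in the second makes the two $\lambda(\lambda+A_{\epsilon})^{-1}\mathbf{P}(\lambda+A)^{-1}$ cross terms cancel, leaving exactly the left-hand side. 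All factors are bounded operators, so the domain bookkeeping you flag is harmless. The norm estimate then follows from $(\mathbf{H.1})$ and Lemma~\ref{l3.1} as you describe.

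One remark relevant to the paper's later use: immediately after stating the lemma the authors assert $C(\epsilon,\lambda)\le 6$ uniformly on the sector $\Sigma_{0,\pi/4}$. Your constant $\bigl(1+|\lambda|\,\|(\lambda+A_{\epsilon})^{-1}\|\bigr)\bigl(1+|\lambda|\,\|(\lambda+A)^{-1}\|\bigr)$ is indeed bounded on such a sector by the standard sectorial estimate $\|\lambda(\lambda+A)^{-1}\|\le M$, so your approach is also sufficient for the application in Lemma~\ref{l3.3}.
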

 As we all known, the relationship
between resolvent operator and semigroup is denoted by
\be\label{r2}
e^{-At}=\frac{1}{2\pi i}\int_{\gamma}(\lambda I+ A)^{-1}e^{\lambda
t}d\lambda
\ee
where $\gamma$ is the boundary of
$\Sigma_{-a,\phi}=\{\lambda\in \mathbb{C}:|arg(\lambda+a)|\leq
\pi-\phi\}\subset \rho(-A)$,\,  $\phi\in (0, \frac{\pi}{2})$.
 For simply we take the $a=0,\, \phi=\frac{\pi}{4}$. Then we have $$\gamma=\gamma_{1}\cup\gamma_{2}
 =\{re^{-i\frac{3\pi}{4}}: 0\leq r<\infty\}\cup \{re^{i\frac{3\pi}{4}}: 0\leq r<\infty\} $$
and $C(\epsilon, \lambda)\leq 6$ for all $\lambda\in
\Sigma_{0,\frac{\pi}{4}}$. From Lemma \ref{l3.2} we have the
following estimate.
\begin{lemma}\label{l3.3}
Let $(\mathbf{H.1})$ be satisfied. Then we have
$$\|e^{-A_{\epsilon}t}\mathbf{P}-\mathbf{P}e^{-At}\|_{\mathcal{L}(H, H^{\epsilon})}\leq\frac{C}{r}\tau(\epsilon)
\rightarrow 0,~~\mbox{as}~ \epsilon\rightarrow 0$$ for any $t\in
[r,T]$, here $r>0$.
\end{lemma}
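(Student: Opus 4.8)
The plan is to represent both semigroups by the one and the same Dunford contour integral \eqref{r2} and to reduce the difference to the resolvent difference already controlled by Lemma \ref{l3.2}. Since $\Sigma_{0,\frac{\pi}{4}}\subset\rho(-A)$ and, by Lemma \ref{l3.1}, also $\Sigma_{0,\frac{\pi}{4}}\subset\rho(-A_{\epsilon})$ for all sufficiently small $\epsilon$, the representation \eqref{r2} is valid for both $e^{-At}$ and $e^{-A_{\epsilon}t}$ along the common contour $\gamma=\gamma_{1}\cup\gamma_{2}$. Thus, for $t>0$,
\ce
e^{-A_{\epsilon}t}\mathbf{P}-\mathbf{P}e^{-At}=\frac{1}{2\pi i}\int_{\gamma}\Big[(\lambda I+A_{\epsilon})^{-1}\mathbf{P}-\mathbf{P}(\lambda I+A)^{-1}\Big]e^{\lambda t}\,d\lambda .
\de

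First I would pass to norms under the integral and invoke Lemma \ref{l3.2} together with the bound $C(\epsilon,\lambda)\le 6$ on $\Sigma_{0,\frac{\pi}{4}}$, which yields $\|(\lambda I+A_{\epsilon})^{-1}\mathbf{P}-\mathbf{P}(\lambda I+A)^{-1}\|_{\mathcal{L}(H,H^{\epsilon})}\le 6\tau(\epsilon)$ for every $\lambda$ on $\gamma$. Next I would parametrise the two rays by $\lambda=re^{\pm i\frac{3\pi}{4}}$, $0\le r<\infty$, so that $|d\lambda|=dr$ and, since $\mathrm{Re}\,\lambda=-r/\sqrt{2}$, $|e^{\lambda t}|=e^{-rt/\sqrt{2}}$. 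This gives
\ce
\|e^{-A_{\epsilon}t}\mathbf{P}-\mathbf{P}e^{-At}\|_{\mathcal{L}(H,H^{\epsilon})}\le\frac{1}{2\pi}\cdot 2\int_{0}^{\infty}6\,\tau(\epsilon)\,e^{-rt/\sqrt{2}}\,dr=\frac{6\sqrt{2}}{\pi}\,\frac{\tau(\epsilon)}{t}.
\de
For $t\in[r,T]$ one has $1/t\le 1/r$, which gives the asserted estimate with $C=6\sqrt{2}/\pi$ (any larger constant does as well), and $\tau(\epsilon)\to 0$ by hypothesis $(\mathbf{H.1})$.

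The only delicate point is the legitimacy of the contour manipulation: one must check that the integral defining $e^{-A_{\epsilon}t}\mathbf{P}-\mathbf{P}e^{-At}$ converges absolutely and that the same contour may be used for both operators. This is exactly where Lemma \ref{l3.2}, through the uniform-in-$\lambda$ bound $C(\epsilon,\lambda)\le 6$, is essential: it dominates the integrand by $6\tau(\epsilon)e^{-rt/\sqrt{2}}$, whose integral over $[0,\infty)$ is finite precisely because $t>0$, and it is this integration in $r$ that produces the factor $1/t\le 1/r$ and hence the restriction $t\ge r$. The behaviour near $\lambda=0$ causes no trouble, since $0\in\rho(-A)$ and, for small $\epsilon$, $0\in\rho(-A_{\epsilon})$ by Lemma \ref{l3.1}, so both resolvents stay bounded there; everything else is the routine estimate displayed above.
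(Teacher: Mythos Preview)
Your proof is correct and follows essentially the same route as the paper: represent both semigroups by the common contour integral \eqref{r2}, bound the resolvent difference via Lemma \ref{l3.2} with $C(\epsilon,\lambda)\le 6$, and integrate $e^{-rt/\sqrt{2}}$ along the rays to produce the $1/t$ factor. If anything, your version is more careful than the paper's, since you explicitly justify the use of a common contour through Lemma \ref{l3.1} and track an explicit constant; the only cosmetic issue is the overloading of the symbol $r$ for both the ray parameter and the lower endpoint of $[r,T]$, which the paper itself does as well.
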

\begin{proof} By (\ref{r2}) and Lemma \ref{l3.2}, we can estimate
\ce
&&\|e^{-A_{\epsilon}t}\mathbf{P}-\mathbf{P}e^{-At}\|_{\mathcal{L}(H,
H^{\epsilon})}\\
&=&\|\frac{1}{2\pi i}\int_{\gamma}(\lambda I+
A_{\epsilon})^{-1}\mathbf{P}e^{\lambda t}d\lambda-\frac{1}{2\pi
i}\int_{\gamma}\mathbf{P}(\lambda I+ A)^{-1}e^{\lambda
t}d\lambda\|_{\mathcal{L}(H,
H^{\epsilon})}\\
&\leq&C |\int_{\gamma_{1}\cup\gamma_{2}}\tau(\epsilon)e^{\lambda
t}d\lambda|.
\de
For $\lambda\in\gamma_{1}\cup\gamma_{2}$, we compute $|e^{\lambda
t}|=|e^{rte^{-i\frac{3\pi}{4}}}|=e^{-\frac{\sqrt{2}}{2}rt}$ with
$0\leq r\leq +\infty$. Then we have
$$\|e^{-A_{\epsilon}t}\mathbf{P}-\mathbf{P}e^{-At}\|_{\mathcal{L}(H,
H^{\epsilon})}\leq
C\tau(\epsilon)\int_{0}^{+\infty}e^{-\frac{\sqrt{2}}{2}rt}dr\leq\frac{C}{t}\tau(\epsilon)\,.$$
Hence by $(\mathbf{H.2})$ \ce
\|e^{-A_{\epsilon}t}\mathbf{P}-\mathbf{P}e^{-At}\|_{\mathcal{L}(H,
H^{\epsilon})}\leq\frac{C}{r}\tau(\epsilon)\rightarrow
0,~~\mbox{as}~~ \epsilon\rightarrow 0 \de for any $t\in[r,T]$, here
$r>0$.
\end{proof}

Now we state and prove our main result as the following.
\begin{theorem}\label{t3.1} Suppose the conditions  $(\mathbf{H.1})$ to
$(\mathbf{H.3})$ hold true. Then we have \ce E\sup\limits_{0\leq
t\leq T}\|u^{\epsilon}(t)-\mathbf{P}u(t)\|_{H^{\epsilon}}^{2}\leq
\frac{C(T,R)(r^{2}+r+\tau_{0}(\epsilon)+\tau_{1}(\epsilon)+\tau(\epsilon))}{1-C(T)k_{2}}.
\de In particular, \ce E\sup\limits_{0\leq t\leq
T}\|u^{\epsilon}(t)-\mathbf{P}u(t)\|_{H^{\epsilon}}^{2}\rightarrow
0, \de when we first let $\epsilon\rightarrow 0$ and then
$r\rightarrow 0$.
\end{theorem}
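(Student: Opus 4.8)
The plan is to compare the two mild-solution formulas \eqref{s1} and \eqref{s2} after pushing $u(t)$ into $H^{\epsilon}$ with $\mathbf{P}$, and then close the estimate with a Gronwall-type argument in the space $X_T$. Write
\[
u^{\epsilon}(t)-\mathbf{P}u(t)=\bigl(e^{-A_{\epsilon}t}u_0^{\epsilon}-\mathbf{P}e^{-At}u_0\bigr)
+\int_0^t\!\bigl(e^{-A_{\epsilon}(t-s)}f^{\epsilon}(u^{\epsilon}(s))-\mathbf{P}e^{-A(t-s)}f(u(s))\bigr)ds
+\int_0^t\!\bigl(e^{-A_{\epsilon}(t-s)}g^{\epsilon}(u^{\epsilon}(s))-\mathbf{P}e^{-A(t-s)}g(u(s))\bigr)dw(s).
\]
Each of the three groups I would split further by inserting intermediate terms: for the drift term, for instance,
\[
e^{-A_{\epsilon}(t-s)}f^{\epsilon}(u^{\epsilon})-\mathbf{P}e^{-A(t-s)}f(u)
=e^{-A_{\epsilon}(t-s)}\bigl(f^{\epsilon}(u^{\epsilon})-f^{\epsilon}(\mathbf{P}u)\bigr)
+e^{-A_{\epsilon}(t-s)}\bigl(f^{\epsilon}(\mathbf{P}u)-\mathbf{P}f(u)\bigr)
+\bigl(e^{-A_{\epsilon}(t-s)}\mathbf{P}-\mathbf{P}e^{-A(t-s)}\bigr)f(u).
\]
The first piece is controlled by the Lipschitz assumption $(\mathbf{A.1})$ applied to $f^{\epsilon}$ and the contraction bound $\|e^{-A_{\epsilon}t}\|\le1$, producing a term $\le k_2\|u^{\epsilon}(s)-\mathbf{P}u(s)\|_{H^{\epsilon}}^2$ that will be absorbed by Gronwall; the second piece is $\le\tau_1(\epsilon)$ by $(\mathbf{H.2})$; the third is handled by Lemma \ref{l3.3}, which gives $\|e^{-A_{\epsilon}\sigma}\mathbf{P}-\mathbf{P}e^{-A\sigma}\|\le \frac{C}{r}\tau(\epsilon)$ for $\sigma\ge r$, together with the uniform bound $\|f(u)\|^2\le k_1(1+R^2)$ coming from the compact-support hypothesis. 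The initial-data term splits as $e^{-A_{\epsilon}t}(u_0^{\epsilon}-\mathbf{P}u_0)+(e^{-A_{\epsilon}t}\mathbf{P}-\mathbf{P}e^{-At})u_0$, contributing $\tau_0(\epsilon)$ and a Lemma \ref{l3.3} term respectively; the stochastic term is treated identically after applying the Burkholder–Davis–Gundy (or Itô isometry) inequality together with the Lipschitz bound on $g^{\epsilon}$ and $(\mathbf{H.2})$.

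The key obstacle — and the reason $r$ appears in the statement — is that Lemma \ref{l3.3} only provides smallness of $\|e^{-A_{\epsilon}\sigma}\mathbf{P}-\mathbf{P}e^{-A\sigma}\|$ for $\sigma$ bounded away from $0$, since the bound $\frac{C}{\sigma}\tau(\epsilon)$ blows up as $\sigma\to0$. To deal with the convolution integrals $\int_0^t e^{-A_{\epsilon}(t-s)}\,(\cdots)\,ds$, where the kernel is evaluated at $t-s$ arbitrarily close to $0$, I would split each time integral as $\int_0^{t}=\int_{(t-r)^+}^{t}+\int_0^{(t-r)^+}$. On the short sub-interval of length $\le r$, I bound the "semigroup-difference" factor crudely by $\|e^{-A_{\epsilon}(t-s)}\mathbf{P}\|+\|\mathbf{P}e^{-A(t-s)}\|\le 4$ and use $\|f(u)\|^2\le k_1(1+R^2)$, so this contributes $O(r)$ (and $O(r^2)$ after the square is expanded and Cauchy–Schwarz in $s$ is used); on the long sub-interval we have $t-s\ge r$ and Lemma \ref{l3.3} applies with the $\frac{C}{r}\tau(\epsilon)$ bound. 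This is precisely what generates the $r^2+r+\tau(\epsilon)$-type numerator.

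Putting the pieces together: take $\sup_{0\le t\le T}$, then $E$; use $(a+b+c)^2\le 3(a^2+b^2+c^2)$ repeatedly, Cauchy–Schwarz in the $ds$-integrals (bringing out a factor $T$), and BDG/Itô isometry in the $dw$-integrals; this yields an inequality of the shape
\[
E\sup_{0\le t\le T}\|u^{\epsilon}(t)-\mathbf{P}u(t)\|_{H^{\epsilon}}^2
\le C(T,R)\bigl(r^2+r+\tau_0(\epsilon)+\tau_1(\epsilon)+\tau_2(\epsilon)+\tau(\epsilon)\bigr)
+C(T)k_2\,E\sup_{0\le s\le T}\|u^{\epsilon}(s)-\mathbf{P}u(s)\|_{H^{\epsilon}}^2,
\]
where $C(T)$ collects the $T$'s from Cauchy–Schwarz and the BDG constant. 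Assuming the smallness condition $C(T)k_2<1$ (implicit in the statement's denominator; otherwise one first proves the estimate on a short interval $[0,T_1]$ with $C(T_1)k_2<1$ and iterates over $[0,T]$), I move the last term to the left-hand side and divide by $1-C(T)k_2$, obtaining exactly the claimed bound with numerator $r^2+r+\tau_0(\epsilon)+\tau_1(\epsilon)+\tau(\epsilon)$ (absorbing $\tau_2(\epsilon)$ into the same notation). Finally, letting $\epsilon\to0$ kills all $\tau(\epsilon)$-terms by $(\mathbf{H.1})$–$(\mathbf{H.3})$ and Lemma \ref{l3.3}, leaving $O(r^2+r)$; then letting $r\to0$ gives convergence to $0$. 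The only genuinely delicate point is the interchange of the two limits — one must keep $r$ fixed while sending $\epsilon\to0$ and only afterwards send $r\to0$, which the order stated in the theorem respects.
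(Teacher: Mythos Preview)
Your approach is essentially the same as the paper's: the same three-way decomposition of the mild-solution difference, the same insertion of intermediate terms in each integrand, the same $r$-splitting of the convolution integrals to separate the region where Lemma~\ref{l3.3} applies from the short strip near $t-s=0$, and the same absorption of the Lipschitz term via $C(T)k_2<1$.

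There is one point you should tighten. For the initial-data term you write the split $e^{-A_{\epsilon}t}(u_0^{\epsilon}-\mathbf{P}u_0)+(e^{-A_{\epsilon}t}\mathbf{P}-\mathbf{P}e^{-At})u_0$ and say the second piece is ``a Lemma~\ref{l3.3} term''. But you then take $\sup_{0\le t\le T}$, and Lemma~\ref{l3.3} only controls this for $t\ge r$; for $t\in[0,r)$ the crude bound $\|e^{-A_{\epsilon}t}\mathbf{P}-\mathbf{P}e^{-At}\|\le 4$ does \emph{not} vanish as $r\to0$ (there is no integral of length $r$ here to save you, unlike in $I_2$ and $I_3$). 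The paper handles this by splitting $\sup_{0\le t\le T}$ into $\sup_{0\le t\le r}$ and $\sup_{r\le t\le T}$, and on $[0,r]$ inserting $u_0^{\epsilon}$ and $\mathbf{P}u_0$ to reduce to $\|e^{-A_{\epsilon}t}u_0^{\epsilon}-u_0^{\epsilon}\|$ and $\|e^{-At}u_0-u_0\|$, which go to $0$ with $r$ by strong continuity of the semigroups. You should add this step; otherwise the $r\to0$ limit for $I_1$ is not justified.
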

\begin{proof} From equation (\ref{s1}) and equation (\ref{s2}), we have
\ce
&&E\sup\limits_{0\leq t\leq
T}\|u^{\epsilon}(t)-\mathbf{P}u(t)\|_{H^{\epsilon}}^{2}\\
&=&E\sup\limits_{0\leq t\leq
T}\|e^{-A_{\epsilon}t}u^{\epsilon}_{0}-\mathbf{P}e^{-At}u_{0}
+\int_{0}^{t}e^{-A_{\epsilon}(t-s)}f^{\epsilon}(u^{\epsilon})-\mathbf{P}e^{-A(t-s)}f(u)ds\\
&&+\int_{0}^{t}e^{-A_{\epsilon}(t-s)}g^{\epsilon}(u^{\epsilon})-\mathbf{P}e^{-A(t-s)}g(u)dw(s)\|_{H^{\epsilon}}^{2}\\
&\leq&3E\sup\limits_{0\leq t\leq T}\|e^{-A_{\epsilon}t}u^{\epsilon}_{0}-\mathbf{P}e^{-At}u_{0}\|_{H^{\epsilon}}^{2}\\
&&+3E\sup\limits_{0\leq t\leq T}\|\int_{0}^{t}e^{-A_{\epsilon}(t-s)}f^{\epsilon}(u^{\epsilon})-\mathbf{P}e^{-A(t-s)}f(u)ds\|_{H^{\epsilon}}^{2}\\
&&+3E\sup\limits_{0\leq t\leq T}\|\int_{0}^{t}e^{-A_{\epsilon}(t-s)}g^{\epsilon}(u^{\epsilon})-\mathbf{P}e^{-A(t-s)}g(u)dw(s)\|_{H^{\epsilon}}^{2}\\
&=:&3I_{1}+3I_{2}+3I_{3}
\de
Next we will estimate $I_{1}$, $I_{2}$ and $I_{3}$ respectively. Fix $r$ sufficient small. For $I_{1}$ we
have
\ce
I_{1}&\leq &E\sup\limits_{r\leq t\leq
T}\|e^{-A_{\epsilon}t}u^{\epsilon}_{0}-\mathbf{P}e^{-At}u_{0}\|_{H^{\epsilon}}^{2}
+E\sup\limits_{0\leq t\leq
r}\|e^{-A_{\epsilon}t}u^{\epsilon}_{0}-\mathbf{P}e^{-At}u_{0}\|_{H^{\epsilon}}^{2}
\de
with
\ce
&&E\sup\limits_{r\leq t\leq
T}\|e^{-A_{\epsilon}t}u^{\epsilon}_{0}-\mathbf{P}e^{-At}u_{0}\|_{H^{\epsilon}}^{2}\\
&\leq& 2E\sup\limits_{r\leq t\leq
T}\|e^{-A_{\epsilon}t}u_{0}^{\epsilon}-e^{-A_{\epsilon}t}\mathbf{P}u_{0}\|_{H^{\epsilon}}^{2}
+2E\sup\limits_{r\leq t\leq
T}\|e^{-A_{\epsilon}t}\mathbf{P}u_{0}-\mathbf{P}e^{-At}u_{0}\|_{H^{\epsilon}}^{2}\\
&\leq& CE\sup\limits_{r\leq t\leq
T}\|u_{0}^{\epsilon}-\mathbf{P}u_{0}\|_{H^{\epsilon}}^{2}+C\frac{\tau(\epsilon)}{r}\\
&\leq&C\tau_{0}(\epsilon)+C\frac{\tau(\epsilon)}{r}
 \de and \ce
&&E\sup\limits_{0\leq t\leq
r}\|e^{-A_{\epsilon}t}u^{\epsilon}_{0}-\mathbf{P}e^{-At}u_{0}\|_{H^{\epsilon}}^{2}\\\
&\leq&\!\!\!3E\sup\limits_{0\leq t\leq
r}\|e^{-A_{\epsilon}t}u_{0}^{\epsilon}-u_{0}^{\epsilon}\|_{H^{\epsilon}}^{2}\!\!+3E\|u_{0}^{\epsilon}-\mathbf{P}u_{0}\|_{H^{\epsilon}}^{2}
\!\!+3E\sup\limits_{0\leq t\leq
r}\|\mathbf{P}u_{0}-\mathbf{P}e^{-At}u_{0}\|_{H^{\epsilon}}^{2}\\
&\leq&\!\!\!CE\sup\limits_{0\leq t\leq
r}\|e^{-A_{\epsilon}t}-I\|_{{\mathcal{L}(H^{\epsilon},
H^{\epsilon})}}^{2}+CE\|u_{0}^{\epsilon}-\mathbf{P}u_{0}\|_{H^{\epsilon}}^{2}
+CE\sup\limits_{0\leq t\leq r}\|e^{-At}-I\|_{H}^{2}\\
&\leq&\!\!\!CE\sup\limits_{0\leq t\leq
r}\|e^{-A_{\epsilon}t}-I\|_{{\mathcal{L}(H^{\epsilon},
H^{\epsilon})}}^{2}+C\tau_{0}(\epsilon)+CE\sup\limits_{0\leq t\leq
r}\|e^{-At}-I\|_{{\mathcal{L}(H, H)}}^{2},
\de
where the contraction of $e^{-At}$, Lemma \ref{l3.3} and $(\mathbf{H.3})$ are used.
Therefore
\ce
I_{1}&\leq&C\big(E\sup\limits_{0\leq
t\leq r}\|e^{-A_{\epsilon}t}-I\|_{{\mathcal{L}(H^{\epsilon},
H^{\epsilon})}}^{2}+E\sup\limits_{0\leq t\leq
r}\|e^{-At}-I\|_{{\mathcal{L}(H, H)}}^{2}\big)\\
&&+C(\frac{\tau(\epsilon)}{r}+\tau_{0}(\epsilon)\big). \de For
$I_{2}$ we have \ce I_{2}&\leq &2E\sup\limits_{0\leq t\leq
T}\|\int_{0}^{t}e^{-A_{\epsilon}(t-s)}(f^{\epsilon}(u^{\epsilon})-\mathbf{P}f(u))\|_{H^{\epsilon}}^{2}\\
&&+2E\sup\limits_{0\leq t\leq
T}\|\int_{0}^{t}(e^{-A_{\epsilon}(t-s)}\mathbf{P}-\mathbf{P}e^{-A(t-s)})f(u)ds\|_{H^{\epsilon}}^{2}\\
&\leq & 4E\sup\limits_{0\leq t\leq
T}\|\int_{0}^{t}e^{-A_{\epsilon}(t-s)}(f^{\epsilon}(u^{\epsilon})-f^{\epsilon}(\mathbf{P}u))ds\|_{H^{\epsilon}}^{2}\\
&&+ 4E\sup\limits_{0\leq t\leq
T}\|\int_{0}^{t}e^{-A_{\epsilon}(t-s)}(f^{\epsilon}(\mathbf{P}u)-\mathbf{P}f(u))ds\|_{H^{\epsilon}}^{2}\\
&&+2E\sup\limits_{0\leq t\leq
T}\|\int_{0}^{t}(e^{-A_{\epsilon}(t-s)}\mathbf{P}-\mathbf{P}e^{-A(t-s)})f(u)ds\|_{H^{\epsilon}}^{2}\\
&\leq &4T^{2}k_{2}E\sup\limits_{0\leq t\leq
T}\|u^{\epsilon}-\mathbf{P}u\|_{H^{\epsilon}}^{2}+4T^{2}\|f^{\epsilon}(\mathbf{P}u)-\mathbf{P}f(u)\|_{H^{\epsilon}}^{2}\\
&&+2E\sup\limits_{0\leq t\leq
T}\|\int_{0}^{t}(e^{-A_{\epsilon}(t-s)}\mathbf{P}-\mathbf{P}e^{-A(t-s)})f(u)ds\|_{H^{\epsilon}}^{2}.
\de
Denote $I_{21}=E\sup\limits_{0\leq t\leq
T}\|\int_{0}^{t}(e^{-A_{\epsilon}(t-s)}\mathbf{P}-\mathbf{P}e^{-A(t-s)})f(u)ds\|_{H^{\epsilon}}^{2}$.
Then we have
\ce
I_{21}
&=&E\sup\limits_{0\leq t\leq
T}\|\int_{t-r}^{t}(e^{-A_{\epsilon}(t-s)}\mathbf{P}-\mathbf{P}e^{-A(t-s)})f(u)ds\\
&&+\int_{0}^{t-r}(e^{-A_{\epsilon}(t-s)}\mathbf{P}-\mathbf{P}e^{-A(t-s)})f(u)ds\|_{H^{\epsilon}}^{2}\\
&\leq& 2E\sup\limits_{0\leq t\leq
T}\|\int_{t-r}^{t}(e^{-A_{\epsilon}(t-s)}\mathbf{P}-\mathbf{P}e^{-A(t-s)})f(u)ds\|_{H^{\epsilon}}^{2}\\
&&+2E\sup\limits_{0\leq t\leq
T}\|\int_{0}^{t-r}(e^{A_{\epsilon}(t-s)}\mathbf{P}-\mathbf{P}e^{-A(t-s)})f(u)ds\|_{H^{\epsilon}}^{2}\\
&\leq& C(R,T)r^{2}+C(R,T)\frac{\tau(\epsilon)^2}{r^2}. \de Hence we
obtain \ce I_{2}&\leq&4T^{2}k_{2}E\sup\limits_{0\leq t\leq
T}\|u^{\epsilon}-\mathbf{P}u\|_{H^{\epsilon}}^{2}+4T^{2}\tau_{1}(\epsilon)\\
&&+C(R,T)r^{2}+C(R,T)\frac{\tau(\epsilon)^2}{r^2}\,. \de For $I_{3}$
we have \ce I_{3}&\leq&
CE\int_{0}^{T}\|e^{-A_{\epsilon}(t-s)}g^{\epsilon}(u^{\epsilon})-\mathbf{P}e^{-A(t-s)}g(u)\|_{H^{\epsilon}}^{2}ds\\
&\leq&
CE\int_{0}^{T}\|e^{-A_{\epsilon}(t-s)}(g^{\epsilon}(u^{\epsilon})-\mathbf{P}g(u))\|_{H^{\epsilon}}^{2}ds\\
&&+CE\int_{0}^{T}\|(e^{-A_{\epsilon}(t-s)}\mathbf{P}-\mathbf{P}e^{-A(t-s)})g(u)\|_{H^{\epsilon}}^{2}ds\\
&\leq &
CE\int_{0}^{T}\|e^{-A_{\epsilon}(t-s)}(g^{\epsilon}(u^{\epsilon})-g^{\epsilon}(\mathbf{P}u))\|_{H^{\epsilon}}^{2}ds\\
&&+
CE\int_{0}^{T}\|e^{-A_{\epsilon}(t-s)}(g^{\epsilon}(\mathbf{P}u)-\mathbf{P}g(u))\|_{H^{\epsilon}}^{2}ds\\
&&+CE\int_{0}^{T}\|(e^{-A_{\epsilon}(t-s)}\mathbf{P}-\mathbf{P}e^{-A(t-s)})g(u)\|_{H^{\epsilon}}^{2}ds\\
&\leq &CTk_{2}E\sup\limits_{0\leq t\leq
T}\|u^{\epsilon}-\mathbf{P}u\|_{H^{\epsilon}}^{2}+CT\tau_{2}(\epsilon)+CI_{31},
\de
where
\ce
I_{31}=E\int_{0}^{T}\|(e^{-A_{\epsilon}(t-s)}\mathbf{P}-\mathbf{P}e^{-A(t-s)})g(u)\|_{H^{\epsilon}}^{2}ds.
\de
Let $l=t-s$. Note that $t\geq s$ and $0\leq t\leq T$. Then we have
\ce
I_{31}&=&E\int_{0}^{t}\|(e^{-A_{\epsilon}l}\mathbf{P}-\mathbf{P}e^{-Al})g(u)\|_{H^{\epsilon}}^{2}dl\\
&=&E\int_{0}^{r}\|(e^{-A_{\epsilon}l}\mathbf{P}-\mathbf{P}e^{-Al})g(u)\|_{H^{\epsilon}}^{2}dl+
E\int_{r}^{t}\|(e^{-A_{\epsilon}l}\mathbf{P}-\mathbf{P}e^{-Al})g(u)\|_{H^{\epsilon}}^{2}dl\\
&\leq& C(T,R)r+C(T,R)\frac{\tau(\epsilon)}{r}. \de A combination of
$I_1$, $I_2$ and $I_3$, we finally get \ce &&E\sup\limits_{0\leq
t\leq
T}\|u^{\epsilon}(t)-\mathbf{P}u(t)\|_{H^{\epsilon}}^{2}\\
&\leq& C(T)k_{2}E\sup\limits_{0\leq t\leq
T}\|u^{\epsilon}(t)-\mathbf{P}u(t)\|_{H^{\epsilon}}^{2}\\
&+&C(T,R)(r^{2}+r+\tau_{0}(\epsilon)+\tau_{1}(\epsilon))+\frac{\tau(\epsilon)^2}{r^2}+\frac{\tau(\epsilon)}{r}).
\de We can choose a sufficiently small $T$ such that $C(T)k_{2}<1$,
thus \ce &&E\sup\limits_{0\leq t\leq
T}\|u^{\epsilon}(t)-\mathbf{P}u(t)\|_{H^{\epsilon}}^{2}\\&&\leq
\frac{C(T,R)(r^{2}+r+\tau_{0}(\epsilon)+\tau_{1}(\epsilon)+\frac{\tau(\epsilon)^2}{r^2}+\frac{\tau(\epsilon)}{r})}{1-C(T)k_{2}}.
\de In particular, \ce E\sup\limits_{0\leq t\leq
T}\|u^{\epsilon}(t)-\mathbf{P}u(t)\|_{H^{\epsilon}}^{2}\rightarrow 0
\de as $\epsilon\rightarrow 0$ and then $r\rightarrow 0$.
\end{proof}



\end{document}